\newcommand{\field}[1]{\mathbb{#1}}  
\newcommand{\Q}{\field{Q}} 
\newcommand{\R}{\field{R}} 
\newcommand{\Z}{\field{Z}} 
\newcommand{\F}{\field{F}} 
\newcommand{\Mod}[1]{\ (\mathrm{mod}\ #1)}
\DeclareMathOperator{\Frob}{Frob}
\DeclareMathOperator{\Gal}{Gal}
\DeclareMathOperator{\ord}{ord}
\DeclareMathOperator{\BSD}{BSD}
\DeclareMathOperator{\Reg}{\textup{Reg}}
\DeclareFontFamily{U}{wncy}{}
\DeclareFontShape{U}{wncy}{m}{n}{<->wncyr10}{}
\DeclareSymbolFont{mcy}{U}{wncy}{m}{n}
\DeclareMathSymbol{\Sha}{\mathord}{mcy}{"58}
\newtheorem{lemma}{Lemma}
\newtheorem{theorem}[lemma]{Theorem}
\newtheorem{proposition}[lemma]{Proposition}
\theoremstyle{definition}
\newtheorem{definition}[lemma]{Definition}
\newtheorem{remark}[lemma]{Remark}
\numberwithin{lemma}{section}
\numberwithin{equation}{section} 
\numberwithin{figure}{section}
\title[Infinite families of elliptic curves satisfying BSD]{On the identification of elliptic curves that admit infinitely many twists satisfying the Birch--Swinnerton-Dyer conjecture}
\author[B.S. Banwait]{Barinder S. Banwait}
\address{Barinder S. Banwait \\
London\\
UK
}
\email{barinder.s.banwait@gmail.com}
\author[X. Huang]{Xiaoyu Huang}
\address{Xiaoyu Huang\\
Department of Mathematics \\ 
Temple University\\
Wachman Hall\\
Philadelphia, PA 19122\\
USA
}
\email{xiaoyu.huang@temple.edu}
\date{}
\providecommand\@dotsep{5}
\renewcommand{\listoftodos}[1][\@todonotes@todolistname]{%
  \@starttoc{tdo}{#1}}
\subjclass[2010]
{11G05 (primary), 
11Y16, 
14Q05   
(secondary)}
\begin{document}

\maketitle

\begin{abstract}
Recent work of Burungale--Skinner--Tian--Wan established the first infinite families of quadratic twists of non-CM elliptic curves over $\mathbb{Q}$ for which the strong Birch--Swinnerton-Dyer (BSD) conjecture holds. Building on their results, we encode the required hypotheses into an explicit algorithm and apply it to the database of elliptic curves in the $L$-functions and Modular Forms Database (LMFDB), unconditionally identifying all elliptic curves $E$ of conductor at most $500{,}000$ that admit infinitely many quadratic twists satisfying the strong BSD conjecture. Our computations provide certain numerical evidence for a conjecture of Radziwiłł and Soundararajan predicting Gaussian behavior in the analytic order of the Shafarevich--Tate group, while also observing a positive bias within the BSD-satisfying subfamily.
\end{abstract}

\section{Introduction}

Significant progress has been made toward the Birch and Swinnerton-Dyer (BSD) conjecture from a statistical perspective. Through the work of Bhargava and Shankar on average ranks \cite{bhargava2015binary}, combined with the Iwasawa-theoretic results of Skinner--Urban \cite{skinner2014iwasawa}, Zhang \cite{zhang2014selmer}, and others, it is now established that at least $66\%$ of elliptic curves satisfy the rank part of the conjecture \cite{bhargava2014majority}. These results, however, are probabilistic in nature; they assure us that the conjecture holds frequently, but they do not necessarily tell us which specific curves satisfy the full BSD conjecture.

A complementary perspective is the algorithmic verification of the conjecture for specific curves up to a given conductor bound. As a result of an accumulation of works involving several people across many years \cite{grigorov2009computational, miller2011proving, creutz2012second, miller2013explicit, lawson2015vanishing}, it is now known that the full BSD conjecture holds for all elliptic curves over $\mathbb{Q}$ with analytic rank $0$ or $1$ of conductor up to $5{,}000$. At the most basic level, this is achieved by computing the order of the Shafarevich--Tate group $\Sha(E)$ and showing it matches the other terms in the BSD conjectural formula (which are neatly packaged into a quantity $\Sha(E)_{an}$ called the \emph{analytic order of $\Sha(E)$}). There are $31{,}073$ elliptic curves with conductor less than $5{,}000$; out of these, only $691$ have analytic rank strictly greater than $1$, meaning that the full BSD conjecture has been verified for $\approx97.78\%$ of elliptic curves of conductor less than $5{,}000$. We refer to the end of Section 5 of \cite{lawson2015vanishing} for an account of how the multiple aforementioned works fit together to prove the stated result. 

In this paper, we combine the algorithmic rigor of the second approach with the infinite scope of the first by identifying specific elliptic curves up to a conductor bound that admit \emph{infinitely many} quadratic twists satisfying the full BSD conjecture. Until recently, such families were known primarily for curves with Complex Multiplication (CM), thanks to the work of Coates--Wiles \cite{coates1977conjecture} and Rubin \cite{rubin1992p}.

For non-CM curves, the landscape has expanded significantly with the recent work of Burungale--\allowbreak Skinner--\allowbreak Tian--\allowbreak Wan \cite{burungale2024zeta}. By establishing the \(p\)-part of the BSD conjecture at supersingular primes, together with earlier results in \cite{skinner2014iwasawa, skinner2016multiplicative, cai20202, zhai2016non} they proved the existence of infinite families of quadratic twists of non-CM elliptic curves satisfying the full BSD conjecture. In their work, they provided a list of such curves with conductor up to \(150\). While this list is given explicitly, details of the underlying algorithmic procedure for identifying these curves, as well as an implementation for verifying the requisite local conditions, are not made explicit.
 
Our first contribution is to translate the theoretical conditions of Burungale--Skinner--Tian--Wan into explicit, executable algorithms. We implement these checks and run them on the entire database of elliptic curves over $\Q$ in the $L$-functions and Modular Forms Database (LMFDB) \cite{lmfdb}. This allows us to extend the list of known examples, identifying all elliptic curves of conductor $N < 500{,}000$ that unconditionally admit infinite families of BSD-satisfying twists.

\begin{theorem}
Let $\mathcal{C}$ be the set of elliptic curves listed in \cite{banwait2024quadtwist}. For every $E \in \mathcal{C}$, there exists an explicit infinite family of quadratic twists $\{E_d\}$ such that each $E_d$ satisfies the full Birch and Swinnerton-Dyer conjecture.
\end{theorem}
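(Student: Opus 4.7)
The plan is to reduce the theorem to a finite, verifiable computation by pulling back the hypotheses of the Burungale--Skinner--Tian--Wan result to an explicit list of local conditions on $E$, then checking these conditions exhaustively over LMFDB.

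First I would unpack the main theorem of \cite{burungale2024zeta}: it asserts that if $E/\Q$ is a non-CM elliptic curve satisfying an explicit bundle of hypotheses (including a mod-$p$ irreducibility condition on the Galois representation, local conditions at the primes of bad reduction, ramification constraints at the supersingular primes $p$ under consideration, and an initial analytic rank $0$ or $1$ twist serving as a ``seed''), then there is an infinite family of quadratic twists $\{E_d\}_{d \in \mathcal{D}}$, with $\mathcal{D}$ defined by congruence conditions on $d$, such that every $E_d$ satisfies the full BSD conjecture. The strategy is therefore not to reprove this theorem but to certify its hypotheses on a concrete list of curves.

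Next I would translate each hypothesis into an executable predicate on an LMFDB curve record. For most of them the needed local data (Kodaira types, Tamagawa numbers, reduction types at each bad prime, images of the mod-$p$ Galois representation) is already tabulated in LMFDB, so the check is a straightforward lookup; for the remaining conditions (e.g.\ verifying a suitable supersingular prime of good reduction exists and that the required ramification/level-raising condition at that prime holds, and exhibiting a seed twist of analytic rank $\le 1$ with nonvanishing $L$-value or explicit Heegner point) I would write routines calling standard tools. Then $\mathcal{C}$ is \emph{defined} as the subset of non-CM curves of conductor $N < 500{,}000$ on which every predicate returns true. Running the combined script over LMFDB produces $\mathcal{C}$, and the explicit congruence set $\mathcal{D}(E)$ of admissible twist parameters is read off from the same local data for each $E \in \mathcal{C}$, yielding the family $\{E_d\}_{d \in \mathcal{D}(E)}$ promised in the statement.

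Once the predicates are in place, the theorem follows by construction: for each $E \in \mathcal{C}$ the hypotheses of \cite{burungale2024zeta} are verified by the output of the algorithm, and hence their theorem applies verbatim to yield the BSD-satisfying family. The main obstacle is not mathematical but implementational, namely making the local checks both \emph{sound} (no false positives, especially at the small primes $2,3,5$ where the Burungale--Skinner--Tian--Wan hypotheses carry extra caveats and where LMFDB's mod-$p$ image data requires careful interpretation) and \emph{exhaustive} (no curve of conductor below the bound is silently skipped because of missing data or a special reduction type). A secondary difficulty is certifying the existence of the seed twist without appealing to conjectural input; here one can fall back on direct computation of $L$-values to sufficient precision, combined with Kolyvagin's theorem when a nontrivial Heegner point is available, so that the rank and $\Sha$ information needed to bootstrap the infinite family is itself unconditional.
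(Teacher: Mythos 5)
Your overall architecture matches the paper's: translate the hypotheses guaranteeing the $p$-part of BSD for every prime into executable predicates, run them over the LMFDB up to conductor $500{,}000$, and let $\mathcal{C}$ be the set of curves passing all checks, with the twist family $\mathcal{D}(E)$ read off from congruence conditions. For the odd primes your sketch is faithful to what the paper does (irreducibility and a ramified multiplicative prime $q$ for good ordinary/multiplicative $p$, essentially nothing for supersingular $p>3$, the condition $a_3(E)\neq 0$ for supersingular $p=3$, and separate conditions when $p\mid d$).

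There is, however, a genuine gap at $p=2$, which is where the real work of the paper lies and which your proposal treats as a routine lookup. The result of Burungale--Skinner--Tian--Wan does not come with a single clean bundle of hypotheses for the $2$-part: their Theorem 10.12 merely points to Cai--Li--Zhai and to Zhai, whose theorems carry different and partly undocumented assumptions depending on whether $E(\Q)[2]$ is trivial or of order $2$ and on the sign of the discriminant (e.g.\ $\ord_2(L^{(\mathrm{alg})}(E,1))$ must equal $0$, $1$, or $-1$ in the respective cases, $E$ must have rank $0$ with the $2$-part of BSD already known, and in the $2$-torsion case the $2$-isogenous curve $E'$ must satisfy $E'(\Q)[2]\cong\Z/2\Z$ and $\Sha(E')[2]=0$ --- conditions the paper had to assemble into its own Theorem and confirm partly by private communication with the authors). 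Crucially, there is \emph{no} applicable theorem when $E$ has full rational $2$-torsion, and implementing the hypotheses of \cite{burungale2024zeta} verbatim, as you propose, would be unsound: the paper shows that four curves listed in their own Example 1 (62a1, 66b1, 105a1, 141c1) are not in fact covered by any existing $2$-part result. Your claim that ``the main obstacle is not mathematical but implementational'' is therefore the wrong diagnosis. Separately, your appeal to rank-$1$ seed twists, Heegner points, and Kolyvagin is not part of this argument at all: the families here consist entirely of rank-$0$ twists with $L(E_d,1)\neq 0$, and the nonvanishing for infinitely many $d$ in the prescribed congruence classes is supplied by the cited analytic results, not by a Heegner-point bootstrap.
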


The work \cite{banwait2024quadtwist} identifies $36{,}687$ semistable elliptic curves admitting infinitely many quadratic twists satisfying the BSD conjecture, accounting for approximately $20.5\%$ of optimal semistable analytic-rank-0 elliptic curves and $1.20\%$ of all elliptic curves with conductor \(N<500{,}000\). 
We stress that unlike the probabilistic results, the BSD conjecture holds \emph{unconditionally} for these families. This provides us with a unique dataset: a large collection of elliptic curves where the order of the Shafarevich--Tate group $|\Sha(E_d)|$ is known unconditionally.

Our second contribution uses this dataset to study the statistical distribution of $\Sha(E_d)$. A conjecture of Radziwiłł and Soundararajan \cite{radziwill2015moments}, motivated by random matrix theory, predicts that for a fixed elliptic curve $E$, the values of $\log|\Sha(E_d) - \frac{1}{2}\log|d|$ (suitably normalized) should follow a standard Gaussian distribution as $d$ varies. Previously, this conjecture could only be tested numerically by assuming the validity of BSD to interpret analytic $L$-values as actual group orders \cite{dabrowski2016behaviour}.

Because our dataset consists of families for which the full BSD conjecture is now unconditionally established, we are able to study the true order of $\Sha(E_d)$ without relying on this assumption. We structure our analysis in two steps. First, to establish a baseline, we look at generic twists; assuming BSD for these twists, we recover the predicted Gaussian distribution. Second, we examine our specific, unconditionally BSD-verified families. Here, we observe a significant deviation: the true distribution for these families exhibits a positive bias compared to the generic Radziwiłł--Soundararajan prediction. This deviation is not unexpected: the BSD-verified twists form a constrained arithmetic subfamily, cut out by local congruence and splitting conditions on twist parameter $d$, rather than an unbiased sample of all quadratic twists.

Our work is supported by a code repository that is available here:
    \begin{center}
    \url{https://github.com/cocoxhuang/ants_xvii}
    \end{center}
We consider this repository a project that is built on top of the LMFDB, which we forked and are making use of in our work, all of which is contained in the \texttt{ants\_xvii} folder. See the \texttt{README.md} at the top level of the repository for instructions on how to use the code to verify the claims made in this paper. Our algorithms are written in SageMath \cite{sagemath}. \textbf{All file paths in this paper will be relative to the \texttt{ants\_xvii} directory of the above repository.}

\textbf{Computing Environment.} All computations reported here were carried out on a single laptop: an Apple MacBook Pro with an Apple~M1~Pro processor (eight CPU cores) and 16~GB of unified memory, running macOS~26.4.1. The implementation uses SageMath~10.7, which bundles Python~3.13.3. Algorithm~1, executed against the full LMFDB elliptic-curve database at $N < 500{,}000$ (3{,}064{,}705 curves), runs to completion in 12~minutes and 26~seconds on this machine.

\ack{
    We thank the anonymous referees for their valuable comments and for suggesting recent work that led to improvements in Algorithm~1 as well as the exposition in general. We are grateful to Shuai Zhai for answering our questions regarding the curves listed in \cite[Example~1]{burungale2024zeta}. We also thank Kannan Soundararajan for proposing the question of studying the empirical distribution of \( \lvert \Sha(E_d) \rvert \). We further thank Ashay Burungale, Chris Skinner, Ye Tian, and Xin Wan for answering our questions regarding their results. We would also like to thank Jaclyn Lang for helpful discussions.
}

\section{Quadratic Twist Families That Satisfy the Full BSD Conjecture}\label{sec : p-BSD}


The main contributions of this section are \Cref{algo: base_curve} and \Cref{algo: twists}, which respectively (a) identify elliptic curves admitting infinitely many quadratic twists satisfying full BSD, and (b) determine these twists up to a bound on the twisting parameter. These algorithms are built upon many theoretical advances in the Iwasawa theory of elliptic curves, spanning the last $10$ years, that establish the \emph{$p$-part of BSD} (to be defined in \Cref{ssec:prelims}) under various conditions. After discussing preliminaries, we discuss the $p$-part of BSD for odd primes $p$ in \Cref{ssec:p-part-BSD}, and the $2$-part in \Cref{ssec:2-part-BSD}. There is a certain set in \Cref{ssec:2-part-BSD} whose non-emptiness we need to understand for our algorithm to work; this is discussed in \Cref{ssec:S_non_empty_conds}. We put the $p$-parts and $2$-part together in \Cref{ssec:parts-together} to obtain the mathematical statement \Cref{thm:main_alg}. This result forms the basis of the algorithms, which are declared in pseudocode in \Cref{ssec:algs}. Finally in \Cref{ssec:bsd_curves_cond_150} we discuss the elliptic curves satisfying these conditions up to conductor $150$, and compare our list with the corresponding list in \cite[Example 1]{burungale2024zeta}.

\subsection{Notation and preliminaries}\label{ssec:prelims}

In the sequel, we will use the notation $\BSD(E)$ to mean that the strong Birch--Swinnerton-Dyer conjecture has been established. Following Miller \cite{miller2011proving}, we make the following definitions.

\begin{definition}
We define the \textbf{analytic order of $\Sha(E)$} as follows.
    \begin{equation}\label{eq:strong_bsd}
        \#\Sha(E)_{\mathrm{an}} := \frac{L^{(r)}(E,1)}{r!} \cdot
        \frac{|E(\Q)_\mathrm{tors}|^2}{\Omega(E) \cdot \Reg(E) \cdot
        \prod_{p} c_p},
    \end{equation}
where $r = r_{\mathrm{an}}(E)$, so that the leading factor $L^{(r)}(E,1)/r!$ is the
leading nonzero Taylor coefficient of $L(E,s)$ at $s=1$.
\end{definition}

\begin{definition}\label{def:bsd_archimedean}
We denote by $\BSD(E/\Q, \infty)$ the assertion that the real number
$\#\Sha(E)_{\mathrm{an}}$ is positive.
\end{definition}

\begin{definition}
    We denote by $\BSD(E/\Q, p)$ the following assertions.
    \begin{enumerate}
        \item The rank $r$ of $E(\Q)$ is equal to the analytic rank $r_{\mathrm{an}}(E)$.
        \item The $p$-primary part $\Sha(E)(p)$ of the Shafarevich--Tate group is finite.
        \item The real number $\#\Sha(E)_{\mathrm{an}}$ is rational.
        \item The conjectural formula holds at $p$; that is, \[ \ord_p(\#\Sha(E)_{\mathrm{an}}) = \ord_p(\#\Sha(E)(p)).\]
    \end{enumerate}
\end{definition}

Write $M_\Q = \{\,\text{primes of } \Q\,\} \cup \{\infty\}$ for the set of places of $\Q$. The following is immediate from the definitions, since two positive rational numbers are equal if and only if they have the same $v$-adic valuation for every place $v$.

\begin{proposition}\label{fact:bsd_iff_p}
$\BSD(E)$ holds if and only if $\BSD(E/\Q, v)$ holds for all $v \in M_\Q$.
\end{proposition}


We now specialise to the case $r_{\mathrm{an}}(E) \le 1$, which is the only case
occurring in this paper.

\begin{theorem}[Gross--Zagier, Kolyvagin, Kohnen--Zagier]\label{thm:rank01_reduction}
Suppose $r_{\mathrm{an}}(E) \le 1$. Then assertions (1), (2), (3) of
$\BSD(E/\Q,p)$ hold for every prime $p$, and $\BSD(E/\Q,\infty)$ holds.
Consequently,
\[
    \BSD(E) \ \Longleftrightarrow \
    \ord_p(\#\Sha(E)_{\mathrm{an}}) = \ord_p(\#\Sha(E)(p))
    \quad \text{for all primes } p.
\]
\end{theorem}

\begin{proof}
The hypothesis $r_{\mathrm{an}}(E) \le 1$ implies $r = r_{\mathrm{an}}(E)$
(assertion (1)) and the finiteness of $\Sha(E)$ (assertion (2)), by
Gross--Zagier \cite{GrossZagier1986} and Kolyvagin \cite{Kolyvagin1988}. The
rationality of $\#\Sha(E)_{\mathrm{an}}$ (assertion (3)) follows from the
rationality of $L(E,1)/\Omega(E)$ when $r_{\mathrm{an}}=0$ \cite{Manin1972} and
from the Gross--Zagier formula when $r_{\mathrm{an}}=1$. For $\BSD(E/\Q,\infty)$
we must show $L^{(r)}(E,1)/r! > 0$: if $r_{\mathrm{an}}=0$, the non-negativity of
the central value $L(E,1) \ge 0$ \cite{kohnen1981values} together with
$L(E,1) \ne 0$ gives $L(E,1) > 0$; if $r_{\mathrm{an}}=1$, the Gross--Zagier
formula expresses $L'(E,1)$ as a positive multiple of the N\'eron--Tate height of
a non-torsion Heegner point, so $L'(E,1) > 0$.

By Proposition~\ref{fact:bsd_iff_p}, and as assertions (1)--(3) and
$\BSD(E/\Q,\infty)$ have now been verified, $\BSD(E)$ is equivalent to assertion
(4) holding at every finite prime. Concretely, writing
$\Q^\times \cong \{\pm 1\} \times \bigoplus_p p^{\Z}$, the finite valuations
$(\ord_p q)_p$ of a nonzero rational $q$ fix only its absolute value
$|q| = \prod_p p^{\ord_p q}$ (the product formula); thus the equalities
$\ord_p(\#\Sha(E)_{\mathrm{an}}) = \ord_p(\#\Sha(E)(p))$ pin down
$\#\Sha(E)_{\mathrm{an}}$ up to sign, and the positivity $\BSD(E/\Q,\infty)$
supplies the missing sign at the real place.
\end{proof}

\begin{definition}
Let $E/\Q$ be an elliptic curve of conductor $N$. Let
\(
\phi \colon X_0(N)\to E
\)
be a modular parametrization sending the cusp at infinity to the origin of $E$. We say that $E$ is \emph{optimal} if $\phi$ does not factor through any other elliptic curve in the isogeny class of $E$. If $E$ is optimal and $\omega$ is the N\'eron differential on a global minimal Weierstrass equation for $E$, then the unique integer $\nu_E\in\Q^\times$ satisfying
\[
\phi^*(\omega)=\nu_E f(\tau)\,d\tau,
\]
where $f$ is the normalized newform associated to $E$, is called the \emph{Manin constant} of $E$.
\end{definition}

See \cite{agashe2006manin} for more on the Manin constant. It is conjectured to be $1$ in all cases.

\begin{definition}
    Let $E/\Q$ be an elliptic curve of analytic rank $0$. We define the \emph{algebraic $L$-value} as \[
L^{(\mathrm{alg})}(E,1) := \frac{L(E,1)}{\Omega(E)}.
\]
\end{definition}

\subsection{$p$-part of BSD for $p$ odd}\label{ssec:p-part-BSD}

Building on recent advances \cite[Theorem 2]{skinner2014iwasawa}, \cite[Corollary 10.2]{burungale2024zeta}, \cite[Theorem C]{skinner2016multiplicative}, and \cite[Theorem 9.21]{burungale2024zeta}, it is now known that a large class of elliptic curves $E$ satisfy $\BSD(E,p)$ for odd primes $p$. The case relevant to our work is when the quadratic twists have rank zero.

We begin by isolating the following step to be used in the sequel. It is a slight strengthening of Ribet's level-lowering argument from \cite{ribet1990modular}.

\begin{lemma}\label{lem:souped_level_lowering}
Let $E/\Q$ be a semistable elliptic curve of conductor $N$, and let $p$ be an odd
prime of good ordinary reduction for $E$ such that $\overline{\rho}_{E,p}$ is
irreducible. Then there exists an \emph{odd} prime $q \neq p$ with $q \parallel N$
at which $\overline{\rho}_{E,p}$ is ramified.
\end{lemma}

\begin{proof}
Since $E$ is semistable, $N$ is squarefree, and at each prime $\ell \mid N$ the
curve $E$ has multiplicative reduction; thus $\overline{\rho}_{E,p}$ is ramified at
such an $\ell \neq p$ if and only if $p \nmid \mathrm{ord}_\ell(\Delta_E)$, in which
case its conductor exponent at $\ell$ is $1$, and otherwise $\overline{\rho}_{E,p}$
is unramified at $\ell$. At $p$ itself, good reduction gives prime-to-$p$ conductor.
Consequently the (prime-to-$p$) Serre conductor $N(\overline{\rho}_{E,p})$ is the
squarefree product of those primes $\ell \parallel N$, $\ell \neq p$, at which
$\overline{\rho}_{E,p}$ ramifies; in particular $N(\overline{\rho}_{E,p}) \mid N$ and
is squarefree and prime to $p$. Moreover, as $p$ is a prime of good \emph{ordinary}
reduction, the Serre weight is $k(\overline{\rho}_{E,p}) = 2$.

Suppose, for contradiction, that $\overline{\rho}_{E,p}$ is unramified at every
\emph{odd} prime $q \parallel N$ with $q \neq p$. Then the only prime that can divide
$N(\overline{\rho}_{E,p})$ is $2$, so $N(\overline{\rho}_{E,p}) \in \{1, 2\}$. Since
$\overline{\rho}_{E,p}$ is irreducible with $p$ odd, Ribet's level-lowering theorem
\cite{ribet1990modular} (together with the modularity of $E$) produces a normalized
eigenform of weight $k(\overline{\rho}_{E,p}) = 2$ on $\Gamma_0(N(\overline{\rho}_{E,p}))$,
hence on $\Gamma_0(1)$ or $\Gamma_0(2)$, giving rise to $\overline{\rho}_{E,p}$. But
the modular curves $X_0(1)$ and $X_0(2)$ both have genus $0$, so
$S_2(\Gamma_0(1)) = S_2(\Gamma_0(2)) = 0$; there are no such eigenforms, yielding the desired contradiction.
\end{proof}

\begin{proposition}\label{thm:p_part_bsd}
Let $p$ be an odd prime, $E$ a semistable elliptic curve over $\Q$ with conductor $N$, and $E_d$ the quadratic twist of $E$ by the character associated to the quadratic extension $\mathbb{Q}(\sqrt{d})/\mathbb{Q}$, where $(d,N) = 1$. Write $D$ for the discriminant of this quadratic field, and write $N_{E_d}$ for the conductor of $E_d$. Suppose $L(E_d, 1) \ne 0$. Then $p$ either divides $d$, or else is good ordinary, good supersingular, or multiplicative for $E_d$. In each of these four cases, if the following conditions hold, then $\BSD(E_d,p)$ holds.

\begin{enumerate}
    \item {\label{res:additive}} If $p \mid d$:
    \begin{enumerate}
        \item $p \geq 5$ and $p$ is a good ordinary prime for $E$;
        \item $\overline{\rho}_{E,p}$ is irreducible.
    \end{enumerate}

    \item {\label{res:good}}
    If $p \nmid d$ and is good ordinary: no further conditions.

    \item {\label{res:multi}}
    If $p \nmid d$ and is multiplicative:
    \begin{enumerate}
        \item $\overline{\rho}_{E_d,p}$ is irreducible;
        \item there is a prime $q \ne p$ such that $q \parallel N_{E_d}$ and $\overline{\rho}_{E_d,p}$ is ramified at $q$. 
    \end{enumerate}

    \item {\label{res:sing}}
    If $p \nmid d$ and $p$ is supersingular:
        \begin{enumerate}
            \item $(D,N) = 1$;
            \item if $p = 3$, then $a_3(E) = 0$.
        \end{enumerate}
\end{enumerate}
\end{proposition}

\begin{proof}
    The four cases correspond to the different reduction types for $E_d$ at $p$, with $p \mid d$ being equivalent to $E_d$ having additive reduction at $p$ (because the primes dividing $d$ divide the conductor of $E_d$ to order at least $2$).
    \begin{enumerate}
        \item Since $p \mid d$, we have $p \mid D$, so this is the additive twist case of
        \cite[Theorem~9.21(c)]{burungale2024zeta}, applied with $K = \Q(\sqrt{d})$: conditions
        (a) and (b) supply the hypotheses that $p \geq 5$ is ordinary for $E$ with
        $\overline{\rho}_{E,p}$ irreducible (their $p \nmid 6N$ and $(\mathrm{irr}_\Q)$). For
        their condition $(\mathrm{ram}_K)$, note that (a), (b), and the semistability of $E$
        are exactly the hypotheses of \Cref{lem:souped_level_lowering}, which furnishes an odd
        prime $q \neq p$ with $q \parallel N$ at which $\overline{\rho}_{E,p}$ is ramified;
        since $q$ is odd and $q \nmid d$, it does not divide $D$, giving
        $(\mathrm{ram}_K)$. The passage from the equality of Iwasawa ideals in \cite[Theorem~9.21(c)]{burungale2024zeta} to the $p$-part of the BSD formula for $E_d$ is Kato's rank-zero descent (cf.\ the $r=0$ ordinary case in the proof of \cite[Corollary~10.2]{burungale2024zeta}, following \cite[\S14.20]{kato2004p}).

        \item This case was first dealt with by Skinner--Urban \cite[Theorem 3.33, Theorem 3.35]{skinner2014iwasawa} under the following additional conditions:
        \begin{itemize}
            \item surjectivity of the mod-$p$ representation $\bar{\rho}_{E_d,p}$;
            \item the \emph{ramified prime} condition: there exists a prime $q \mid\mid N_{E_d}$, $q \neq p$, such that $\bar{\rho}_{E_d,p}$ is ramified at $q$.
        \end{itemize}
        Subsequently, in \cite[Theorem C]{skinner2016multiplicative}, the surjectivity condition was weakened to the representation being irreducible.

        We now show that, when $\bar{\rho}_{E_d,p}$ is irreducible, the ramified prime condition follows from semistability of $E$. Since $p$ is odd, $\overline{\rho}_{E_d,p} \cong \overline{\rho}_{E,p} \otimes \chi_d$,
        where $\chi_d$ is the quadratic character of $\Q(\sqrt{d})/\Q$; twisting by a
        character preserves irreducibility, so $\overline{\rho}_{E,p}$ is irreducible as well.
        As $p \nmid d$ and $(d,N) = 1$, the prime $p$ is one of good reduction for $E$, and it
        is good ordinary for $E$ since it is so for $E_d$ (ordinarity being preserved under
        quadratic twist away from $p$). Thus $E$, $p$ satisfy the hypotheses of
        \Cref{lem:souped_level_lowering}, which furnishes an odd prime $q \neq p$ with
        $q \parallel N$ at which $\overline{\rho}_{E,p}$ is ramified.
        
        We claim this same $q$ witnesses the ramified prime condition for $E_d$. Indeed, $q$
        is odd and $q \mid N$, while $(d,N) = 1$ forces $q \nmid d$; hence $q$ does not divide $D$, so $\chi_d$ is unramified at $q$.
        Therefore $\overline{\rho}_{E_d,p}\vert_{I_q} = \overline{\rho}_{E,p}\vert_{I_q}$ is
        ramified, and the conductor exponent of $E_d$ at $q$ equals that of $E$, namely $1$;
        that is, $q \parallel N_{E_d}$ with $\overline{\rho}_{E_d,p}$ ramified at $q$. This is
        precisely the ramified prime condition for $E_d$, and hence \cite[Theorem~C]{skinner2016multiplicative} gives $\BSD(E_d,p)$ in the irreducible case.
        
        The reducible case was addressed for all odd primes by \cite[Theorem D]{castella2025mazur} (building on \cite[Theorem F]{castella2022anticyclotomic}) up to a certain condition on the isogeny character denoted $\phi$ in \emph{loc. cit.}. This condition was removed by \cite[Theorem C]{keller2024anticyclotomic}. Thus, we have $\BSD(E_d,p)$ in both cases (irreducible and reducible).
        \item This is \cite[Theorem C]{skinner2016multiplicative}.
        \item Here $p \nmid d$ is supersingular for $E_d$, hence also for $E$, and
        $a_p(E_d) = \chi_d(p)\,a_p(E) = \pm\,a_p(E)$. We verify the hypotheses of
        \cite[Corollary~10.2]{burungale2024zeta} for $E$, applied with the quadratic field
        $K = \Q(\sqrt{d})$. As $E$ is semistable its conductor $N$ is squarefree, and
        $p \nmid 2N$ since $p$ is odd and of good reduction for $E$. The twisting field has
        discriminant $D$, which is coprime to $Np$ by assumption (a). Finally we check
        $a_p(E) = 0$. If $p > 3$, supersingularity forces $a_p(E) = 0$ by the Hasse bound; and if $p = 3$ then $a_3(E) = 0$ by condition (b).
        
        Thus $E$ satisfies the hypotheses of \cite[Corollary~10.2]{burungale2024zeta}, which
        yields $\BSD(E,p)$; the final clause of that corollary, valid for quadratic twists by a field of discriminant coprime to $Np$, then gives $\BSD(E_d, p)$.\qedhere
    \end{enumerate}
\end{proof}

\begin{remark}
    The semistable condition in the above result is a strong restriction, and is needed for \Cref{res:sing}, \Cref{res:additive},  and for \Cref{res:good} when $\overline{\rho}_{E_d,p}$ is irreducible.

    \Cref{res:sing} relies on \cite[Corollary~10.2]{burungale2024zeta},
    whose proof rests on the resolution of Kobayashi's main conjecture
    \cite[Theorem~10.1]{burungale2024zeta} and which carries the squarefree condition on
    the conductor.
    
    \Cref{res:additive} uses semistability only to supply the \emph{ramified prime} condition via \Cref{lem:souped_level_lowering}; without it, one adds that condition as a hypothesis.
    
    \Cref{res:multi} holds for non-semistable curves since the underlying result \cite[Theorem~C]{skinner2016multiplicative} applies with no restriction on the conductor.
    
    For \Cref{res:good}, we first consider the case of reducible residual representation.
    If $\bar\rho_{E_d,p}$ is reducible, the result holds for non-semistable curves for
    every odd $p$, by \cite[Theorem~D]{castella2025mazur} (with the condition on $\phi$
    removed in \cite[Theorem~C]{keller2024anticyclotomic}), just as in the above proof.
    When $\bar\rho_{E_d,p}$ is irreducible, however, our argument uses the semistability
    of $E$ to produce the ramified prime via \Cref{lem:souped_level_lowering}, which is
    unavailable for a non-semistable base curve. In that setting one may verify the ramified prime condition directly; or, for $p \ge 5$, there is work of Burungale--Castella--Skinner
    \cite[Corollary~1.3.1]{burungale2025base}, which establishes the irreducible case for
    non-semistable curves at the expense of further hypotheses: the curve being non-CM,
    and a condition on the image of $\bar\rho_{E_d,p}$.
\end{remark}

\begin{remark}
    It is important to point out the parts of the above Proposition that depend on unpublished results in the literature. These are Items \ref{res:additive} and \ref{res:sing}, which depend on \cite{burungale2024zeta} and \cite{castella2018iwasawa}; and the reducible case of \ref{res:good}, which depends on \cite{keller2024anticyclotomic}.

    We briefly mention some other related unpublished works that could be useful for future possible algorithmic extensions.

    \begin{itemize}
        \item The preprint \cite{fouquet2021iwasawa} deals with arbitrary reduction type under various assumptions on the mod-$p$ Galois representation attached to $E$, as listed in Theorem 1.7 of \emph{loc. cit.}. It is not immediately apparent how to algorithmically verify these conditions.
        \item The paper \cite{yan2026main} is subsequent to \cite{burungale2024zeta}, adopts a similar approach, and has already appeared in print. Their Corollary 1.4 deals with the good ordinary case for both analytic rank $0$ and $1$ under an assumption labeled `(Im)' appearing in their Theorem 1.2. For our purposes of analytic rank 0, since we don't have \emph{any} additional conditions here, Corollary 1.4 of this paper does not provide an additional input. However, this could be useful for possible extensions of our algorithmic work to analytic rank $1$, although, as with \cite{fouquet2021iwasawa}, it is not clear how to algorithmically verify the condition `(Im)'.
    \end{itemize}
\end{remark}

Since many of the results used in the proof of \Cref{thm:p_part_bsd} apply to a general semistable curve $E$, we record what can be said about $\BSD(E,p)$ directly.

\begin{proposition}\label{thm:p_part_bsd_base}
Let $p$ be an odd prime, $E$ a semistable elliptic curve over $\Q$ with
conductor $N$, and suppose $L(E,1)\neq 0$. Then $p$ is good ordinary, good
supersingular, or multiplicative for $E$, and in each case $\BSD(E,p)$ holds under
the following conditions.
\begin{enumerate}
    \item\label{cond:p_ordinary} If $p$ is good ordinary: no further conditions.
    \item\label{cond:p_mult} If $p$ is multiplicative:
    \begin{enumerate}
        \item $\overline{\rho}_{E,p}$ is irreducible;
        \item there is a prime $q \neq p$ with $q \parallel N$ at which
              $\overline{\rho}_{E,p}$ is ramified.
    \end{enumerate}
    \item\label{cond:p_ss} If $p$ is supersingular: no further conditions.
\end{enumerate}
\end{proposition}

\begin{proof}
Since $E$ is semistable it has no additive primes, leaving the three cases above.
Each is the $d=1$ specialization of the corresponding case of \Cref{thm:p_part_bsd},
with $E$ in place of $E_d$, and we only indicate the (minor) differences.

For \Cref{cond:p_ordinary}, the argument of \Cref{res:good} applies verbatim: when
$\overline{\rho}_{E,p}$ is irreducible, \Cref{lem:souped_level_lowering} supplies a
prime $q\parallel N$ at which it is ramified, whence \cite[Theorem~C]{skinner2016multiplicative}
gives $\BSD(E,p)$; the reducible case is \cite[Theorem~D]{castella2025mazur}
(with the condition on $\phi$ removed in \cite[Theorem~C]{keller2024anticyclotomic}).
\Cref{cond:p_mult} is again \cite[Theorem~C]{skinner2016multiplicative}.

For \Cref{cond:p_ss}, if $p>3$, or $p=3$ with $a_3(E)=0$, this is
\cite[Theorem~1.5]{burungale2024zeta} applied to $E$. The remaining case $p=3$,
$a_3(E)=\pm3$ is \cite[Theorem~C]{castella2018iwasawa}, whose hypothesis that $E$
admit no rational $p$-power isogeny is automatic: a rational $p$-isogeny would give a
$G_{\Q_p}$-stable line in $E[p]$, contradicting the irreducibility of
$\overline{\rho}_{E,p}\vert_{G_{\Q_p}}$ at a supersingular prime (tame inertia acting
through the fundamental characters of level $2$).
\end{proof}

\Cref{thm:p_part_bsd} is used to establish the following result, which is a sharpened version of \cite[Theorem 10.12]{burungale2024zeta}.

\begin{theorem}\label{thm:main}
    Let $E$ be a semistable elliptic curve over $\Q$ with conductor $N$, and $d > 1$ a squarefree integer. Let $E_d$ denote the quadratic twist of $E$ by the character associated to the quadratic extension $\Q(\sqrt{d})/\Q$ of discriminant $D$. Suppose the following conditions hold.
    \begin{enumerate}[label=(\roman*)]
        \item\label{cond:coprime_fund_disc} $(D,3N) = 1$. 
        \item\label{list-item:analytic_rank_zero} $L(E_d, 1) \neq 0$.
        \item\label{list-item:bsd_at_2_for_twist} $\BSD(E_d, 2)$ is true.
        \item\label{cond:supersingular} $a_3(E) \in \left\{-2,  -1, 0, 1, 2\right\}$.
        \item\label{cond:abs_irred} For $p \in \left\{3,5,7\right\}$, $E$ does not admit a rational $p$-isogeny.
        \item\label{cond:ramified_prime} For any prime $p | N$, there exists a multiplicative prime $q \neq p$ of $E$ at which $E[p]$ is ramified.
        \item\label{cond:ord_red} E has ordinary reduction at prime divisors of $d$.
    \end{enumerate}
    Then $\BSD(E_d)$ is true.
\end{theorem}

\begin{proof}
    From \Cref{thm:rank01_reduction} applied to $E_d$ (which is permissible because of assumption~\ref{list-item:analytic_rank_zero}), we need to show that $\BSD(E_d, p)$ holds for all $p$. Condition~\ref{list-item:bsd_at_2_for_twist} treats the prime $2$, so we are reduced to showing $\BSD(E_d, p)$ for odd primes $p$.

    First suppose that $p \mid d$. From Case~\ref{res:additive} of \Cref{thm:p_part_bsd}, we need to show $p \neq 3$, $p$ is a good ordinary prime for $E$, and $\bar{\rho}_{E,p}$ is irreducible. These follow from assumptions \ref{cond:coprime_fund_disc}, \ref{cond:ord_red} and \ref{cond:abs_irred} respectively (noting that \cite[Theorem 4]{mazur1978rational} ensures irreducibility (even surjectivity) of the residual representation for semistable curves and $p \geq 11$).

    If $p \nmid d$ and $p$ is good ordinary, then there is nothing further to check.

    If $p \nmid d$ and $p$ is multiplicative, then the conditions in
    Case~\ref{res:multi} follow from assumptions \ref{cond:abs_irred} and
    \ref{cond:ramified_prime} (noting again that \cite[Theorem 4]{mazur1978rational}
    ensures we only need to consider primes $\leq 7$). Here assumption~\ref{cond:ramified_prime} provides a multiplicative prime $q$ of $E$
    at which $\overline{\rho}_{E,p}$ is ramified; since $q \mid N$ and $(D,N)=1$ we have
    $q \nmid D$, so $\chi_d$ is unramified at $q$. Hence $q \parallel N_{E_d}$ and
    $\overline{\rho}_{E_d,p}\vert_{I_q} = \overline{\rho}_{E,p}\vert_{I_q}$ is ramified, so
    $q$ witnesses the ramified prime condition of Case~\ref{res:multi} for $E_d$.

     If $p \nmid d$ and $p$ is supersingular, then the required conditions are established by assumptions \ref{cond:coprime_fund_disc} and \ref{cond:supersingular}.
\end{proof}

\subsection{$2$-part of BSD for $E_d$}\label{ssec:2-part-BSD}

\Cref{thm:main} has a major condition of knowing $\BSD(E_d,2)$. In this section we collect the conditions under which $\BSD(E_d,2)$ can be established, which will feed into \Cref{thm:main} in \Cref{ssec:parts-together}.

The result \cite[Theorem~10.12]{burungale2024zeta} does not explicitly enumerate all of the hypotheses under which one may conclude $\BSD(E_d,2)$. The discussion there refers to \cite[Theorem~1.5]{cai20202} and \cite[Theorem~1]{zhai2016non}; however, in the latter reference the relevant results are presented as Theorems~1.1--1.9. These theorems address closely related situations but involve differing assumptions, notably concerning the sign of the discriminant and the presence or absence of rational \(2\)-torsion. For the reader's convenience, we therefore provide the following result that collects the required hypotheses into a single, self-contained statement.

\begin{theorem}\label{thm: 2-BSD}
    Let $E/\Q$ be an optimal elliptic curve over $\Q$ of conductor $N$ with odd Manin constant and analytic rank $0$. Let $d$ be a squarefree integer coprime to $N$ and congruent to $1$ modulo $4$, and suppose that every prime divisor of $N$ splits in $\Q(\sqrt{d})$. Assume that the $2$-part of BSD holds for $E$.
    
    If one of the following sets of conditions holds, then $L(E_d, 1) \neq 0$, and the $2$-part of BSD holds for $E_d$.

    \begin{enumerate}
    \item If $E(\Q)[2]$ = 0 and $E$ has negative discriminant:
    \begin{enumerate}
        \item for every $p | d$, $p$ is inert in the number field $\Q[x]/(f(x))$, where $f(x)$ is the two-division polynomial of $E$.
        \item $\ord_2(L^{(\mathrm{alg})}(E,1)) = 0$.
    \end{enumerate}
    \item If $E(\Q)[2]$ = 0 and $E$ has positive discriminant:
    \begin{enumerate}
        \item for every $p | d$, $p$ is inert in the number field $\Q[x]/(f(x))$, where $f(x)$ is the two-division polynomial of $E$.
        \item\label{cond:zhai_different_def} $\ord_2(L^{(\mathrm{alg})}(E,1)) = 0$.
        \item $d > 0$.
    \end{enumerate}
    \item If $E(\Q)[2] \cong \Z/2\Z$:
    \begin{enumerate}
        \item for every $p | d$, $p \equiv 1 \Mod{4}$, and $\ord_2(\#\widetilde{E}(\F_p)) = 1$.
        \item $\ord_2(L^{(\mathrm{alg})}(E,1)) = -1$.
        \item $2$ splits in $\Q(\sqrt{d})$.
        \item\label{item:isogenous_curve_sha_torsion} $E'(\Q)[2] \cong \Z/2\Z$ and $\Sha(E')[2] = 0$, where $E' := E/E(\Q)[2]$ is the $2$-isogenous curve of $E$.
    \end{enumerate}
    \end{enumerate}
\end{theorem}

We stress that in this result it is assumed that one knows the $2$-part of BSD for one elliptic curve.

\begin{proof}
    This result is obtained by amalgamating Theorems 1.1 and 1.5 of \cite{cai20202} for the case $E(\Q)[2] \cong \Z/2\Z$; Theorems 1.1 and 1.2 of \cite{zhai2016non} for the case $E(\Q)[2] = 0$ and negative discriminant; and Theorems 1.3 and 1.4 of \cite{zhai2016non} for the case $E(\Q)[2] = 0$ and positive discriminant. There are then some other conditions needed as follows:
    \begin{enumerate}
        \item The requirement that $E$ have rank $0$ is needed due to the use of modular symbols \cite[Chapter 3]{cremona1997algorithms}. This is mentioned in \cite[Section 1]{cai20202} and confirmed by Zhai in private communication.
        \item The condition in \Cref{cond:zhai_different_def} is written as $\ord_2(L^{(\mathrm{alg})}(E,1)) = 1$ in Zhai's paper because he is using a slightly different definition of $L^{(\mathrm{alg})}(E,1)$ than that being used in \cite{cai20202}. In this paper we are using the same definition as in \cite{cai20202}, so we need to subtract the $2$-adic valuation of the number of connected components of $E(\R)$, yielding $0$.
        \item The condition $E'(\Q)[2] \cong \Z/2\Z$ in \Cref{item:isogenous_curve_sha_torsion} is not explicitly written in the theorem statements of \cite{cai20202}, but is mentioned in Remark 1.3 of \emph{loc. cit.} as being a necessary condition for non-emptiness of the set
        \begin{equation}\label{eqn:S}
            \mathcal{S} := \left\{q \equiv 1 \Mod{4} : q \nmid N,\ \text{ord}_2(\#\widetilde{E}(\F_q)) = 1 \,\right\},
        \end{equation}
        which will become the set of primes where the twists $d$ are supported.
        \qedhere
    \end{enumerate}
\end{proof}

\begin{remark}\label{rem:no-full-four-torsion}
    We note that there are no analogues of Theorems 1.2 and 1.4 of \cite{zhai2016non} in the case $E(\Q)[2] \neq 0$. There are such analogues for the case $E(\Q)[2] \cong \Z/2\Z$ in \cite{cai20202}. Therefore, we do not have such results in the full rational $2$-torsion case, and it is for this reason that this case is missing in the above result.
\end{remark}

\subsection{Condition for $\mathcal{S} \neq \emptyset$}\label{ssec:S_non_empty_conds}

As explained in \cite[Remark 1.3]{cai20202}, once one has shown that the set $\mathcal{S}$ in \Cref{eqn:S} is non-empty, then by Chebotarev density arguments, it must have positive density, yielding infinitely many twists in the case $E(\Q)[2] \cong \Z/2\Z$. However, there remains the problem that it may be empty, meaning that there may be no such twists in this case. This section deals with this scenario.

We note that \Cref{item:isogenous_curve_sha_torsion} of \Cref{thm: 2-BSD} is not sufficient (only necessary) to ensure non-emptiness of $\mathcal{S}$. The curve \href{https://www.lmfdb.org/EllipticCurve/Q/85a1/}{\texttt{85a1}} is an example of a curve that passes all of the above checks, yet for all primes $p \equiv 1 \Mod{4}$ up to $10{,}000$, we have $\ord_2(\#\widetilde{E}(\F_p)) \geq 2$ (excluding the bad primes $5$ and $17$ where the valuation is zero). In this case, the existence of an isogenous curve over $\Q(i)$ with full $2$-torsion (as stated on the curve's \href{https://www.lmfdb.org/EllipticCurve/2.0.4.1/7225.5/a/4}{basechange} page) explains why $\ord_2(\#\widetilde{E}(\F_p)) \geq 2$ for all good primes $p \equiv 1 \Mod{4}$.

In general, the main result of \cite{katz1980galois} applied to $E/\Q(i)$ shows that, if $\ord_2(\#\widetilde{E}(\F_p)) \geq 2$ for all good primes $p \equiv 1 \Mod{4}$, then this must be explained by the existence of an isogenous curve $E'/\Q(i)$ that has $4$ dividing $\#E'(\Q(i))_{\mathrm{tors}}$. Whether or not this happens can be expressed in terms of $E/\Q$ itself, as follows.

\begin{proposition}\label{prop:S_non_empty}
    Let $E/\Q$ be an elliptic curve with $E(\Q)[2] \cong \Z/2\Z$. Write \[ E : y^2 = f(x), \]
    and let $x_0 \in \Q$ be such that $(x_0, 0)$ is the rational point of order $2$. 
    
    Then there exists a good prime $p \equiv 1 \Mod{4}$ with $\ord_2(\#\widetilde{E}(\F_p)) = 1$ if and only if none of $f'(x_0)$, $-f'(x_0)$, or $-\Delta$ are squares in $\Q$.
\end{proposition}

Before giving the proof, we recall the link between $f'(x_0)$ and divisibility of the $2$-torsion point by $2$ that arises in the Kummer theory input to the Weak Mordell--Weil theorem. Writing $f(x) = (x - x_0)(x - \alpha)(x - \beta)$ over $\overline{\Q}$, we have $f'(x_0) = (x_0 - \alpha)(x_0 - \beta)$. The halving criterion is that for a field $K$ in which $f$ splits as above with $x_0 \in K$, the point $P_0 = (x_0, 0)$ lies in $2 E(K)$ if and only if $(x_0 - \alpha)(x_0 - \beta) = f'(x_0)$ is a square in $K$. Concretely, if $Q = (x_1, y_1)$ satisfies $2Q = P_0$, then the duplication formula forces $x_1 - x_0$ to be a square root of $f'(x_0)$ (up to a sign convention). We apply this criterion with $K = \F_p$ in condition~(c) below.

\begin{proof}
    ($\Longleftarrow$): Write $f(x) = (x - x_0) g(x)$ where $g$ is a monic quadratic. Since $E(\Q)[2] \cong \Z/2\Z$, the quadratic $g$ has no rational root, so $\mathrm{disc}(g)$ is a non-square in $\Q$. We have $\Delta(E) = 16 f'(x_0)^2 \, \mathrm{disc}(g)$, whence $\Delta$ and $\mathrm{disc}(g)$ agree in $\Q^*/(\Q^*)^2$; in particular, $\Delta$ is a non-square.

    We seek a prime $p$ satisfying:
    \begin{enumerate}
        \item[(a)] $p \equiv 1 \pmod 4$, i.e., $-1$ is a square mod $p$;
        \item[(b)] $\Delta$ is a non-square mod $p$ (so $g$ is irreducible mod $p$, making $(x_0, 0)$ the only $2$-torsion point of $\widetilde{E}(\F_p)$, and hence the $2$-Sylow cyclic);
        \item[(c)] $f'(x_0)$ is a non-square mod $p$, so that $(x_0, 0) \notin 2\widetilde{E}(\F_p)$ by the halving criterion applied over $\F_p$ as in the preamble (note that $f'(x_0) \in \F_p$ even though the roots of $g$ are only in $\F_{p^2}$); hence the $2$-Sylow has order exactly $2$.
    \end{enumerate}
    Equivalently, we want a prime that splits in $\Q(i)$ and is inert in both $\Q(\sqrt{f'(x_0)})$ and $\Q(\sqrt{\Delta})$.

    Let $L = \Q(i, \sqrt{f'(x_0)}, \sqrt{\Delta})$ and $G = \mathrm{Gal}(L/\Q)$. The action on $(i, \sqrt{f'(x_0)}, \sqrt{\Delta})$ embeds $G \hookrightarrow \{\pm 1\}^3$. It suffices to show that $\sigma = (+1, -1, -1) \in G$: Chebotarev then produces infinitely many primes with $\mathrm{Frob}_p = \sigma$, each satisfying (a), (b), (c).

    The image of $G$ in $\{\pm 1\}^3$ is cut out by the multiplicative relations among $\{-1, f'(x_0), \Delta\}$ modulo squares in $\Q^*$. There are seven nontrivial products to consider:
    \begin{itemize}
        \item $-1$, $f'(x_0)$, $\Delta$ a square: ruled out (trivially, by hypothesis, and by the discriminant computation above, respectively);
        \item $-f'(x_0)$, $-\Delta$ a square: ruled out by hypothesis;
        \item $f'(x_0) \cdot \Delta$ or $-f'(x_0) \cdot \Delta$ a square: not ruled out by the hypothesis.
    \end{itemize}

    We check that the two relations not excluded by hypothesis are compatible with~$\sigma$.

    If $f'(x_0) \cdot \Delta$ is a square in $\Q$, then $\sqrt{\Delta} \in \Q(\sqrt{f'(x_0)})$, forcing every element of $G$ to act with the same sign on $\sqrt{f'(x_0)}$ and $\sqrt{\Delta}$. Our $\sigma$ acts as $(-1, -1)$ on this pair, so the relation is satisfied.

    If $-f'(x_0) \cdot \Delta$ is a square, then $\sqrt{\Delta} \in \Q(i, \sqrt{f'(x_0)})$ with $\sqrt{\Delta} = i \cdot \alpha$ for some $\alpha \in \Q(\sqrt{f'(x_0)})$, so every element of $G$ acts on $\sqrt{\Delta}$ by the product of its actions on $i$ and $\sqrt{f'(x_0)}$. For $\sigma$, this product is $(+1)(-1) = -1$, matching its action on $\sqrt{\Delta}$.

    Thus $\sigma \in G$, and Chebotarev density yields infinitely many good primes $p$ with $\mathrm{Frob}_p = \sigma$. For each such $p$, conditions (a)--(c) hold, so $\ord_2(\#\widetilde{E}(\F_p)) = 1$.
    
    ($\Longrightarrow$): If there exists such a prime $p$, it must satisfy $p \equiv 1 \pmod 4$ and be inert in both $\mathbb{Q}(\sqrt{f'(x_0)})$ and $\mathbb{Q}(\sqrt{\Delta})$. This means the Frobenius element $(+1, -1, -1)$ actually exists in the Galois group. If any of $f'(x_0)$, $-f'(x_0)$, or $-\Delta$ were squares in $\mathbb{Q}$, that specific Frobenius element would be algebraically impossible (it would force a contradiction in the signs). Thus, none of them can be squares.
\end{proof}

\begin{remark}
We have verified Proposition~\ref{prop:S_non_empty} computationally on every elliptic curve~$E$ that reaches this stage of our algorithm. Whenever $E$ satisfies the algebraic criterion of the proposition, we exhibit an explicit prime $q \leq 10^4$ with $q \equiv 1 \pmod{4}$, $q \nmid N$, and $\mathrm{ord}_2\bigl(\#E(\mathbb{F}_q)\bigr) = 1$, thereby witnessing $\mathcal{S} \neq \emptyset$. Conversely, whenever $E$ fails the criterion, we have checked that no such prime exists in the range $q \leq 10^4$, verifying the conclusion $\mathcal{S} = \emptyset$.
\end{remark}

The criterion in the above result will be used in the algorithms to follow to ensure non-emptiness of $\mathcal{S}$.

\subsection{Putting the parts together}\label{ssec:parts-together}
We now seek to combine \Cref{thm:main} and \Cref{thm: 2-BSD}. Specifically, \Cref{thm: 2-BSD} will give condition \ref{list-item:bsd_at_2_for_twist} at the expense of additional assumptions on $E$ and $d$. Moreover, we use \Cref{prop:S_non_empty} to strengthen \Cref{item:isogenous_curve_sha_torsion} of \Cref{thm: 2-BSD} to an actual method for ensuring infinitude of $d$ in that case. To keep track of the many such assumptions, we provide the following result that allows for an algorithmic implementation and justifies the title of this paper (specifically, the ``infinitely many''). It separates the conditions on $E$ from those on $d$, reflecting what is implicitly happening in \cite[Theorem 1.7]{burungale2024zeta}.

\begin{theorem}\label{thm:main_alg}
    Let $E/\Q$ be an elliptic curve of conductor $N$ that satisfies the following conditions:
    \begin{enumerate}
        \item $E$ is semistable.
        \item $a_3(E) \in \left\{-2, -1, 0, 1, 2\right\}$.
        \item $E$ does not admit a rational $p$-isogeny for $p \in \left\{3,5,7\right\}$.
        \item\label{main-thm-ramified-prime}  For any prime $p | N$, there exists a multiplicative prime $q \neq p$ at which $E[p]$ is ramified.
        \item $E$ is optimal in its isogeny class.
        \item $E$ has analytic rank 0.
        \item $\BSD(E, 2)$ is true.
        \item One of the following two sets of conditions hold.
        \begin{enumerate}
            \item\label{tors1} $E(\Q)[2] = 0$, $\ord_2(L^{(\mathrm{alg})}(E,1)) = 0$
            \item\label{tors3} $E(\Q)[2] \cong \Z/2\Z$; $\ord_2(L^{(\mathrm{alg})}(E,1)) = -1$; writing $E : y^2 = f(x)$ and the $2$-torsion point as $(x_0,0)$, none of $f'(x_0)$, $-f'(x_0)$ or $-\Delta_E$ are squares in $\Q$; $\Sha(E')[2] = 0$, where $E' := E/E(\Q)[2]$ is the $2$-isogenous curve of $E$.
        \end{enumerate}
    \end{enumerate}
    Then there exist infinitely many squarefree integers $d$ satisfying the following:
    \begin{enumerate}
        \item $(d, 3N) = 1$.
        \item $d \equiv 1 \Mod{4}$.
        \item $E$ has ordinary reduction at prime divisors of $d$.
        \item 
        \begin{enumerate}
            \item in Item~\ref{tors1}, for every $p | d$, $p$ is inert in the number field $\Q[x]/(f(x))$, where $f(x)$ is the two-division polynomial of $E$; for every $p \mid N$, $p$ splits in $\Q(\sqrt{d})$; if $\Delta_E > 0$, then $d > 0$.
            \item in Item~\ref{tors3}, for every $p | d$, $p \equiv 1 \Mod{4}$, and $\ord_2(\#\widetilde{E}(\F_p)) = 1$; and for every $p \mid 2N$, $p$ splits in $\Q(\sqrt{d})$.
        \end{enumerate}
    \end{enumerate}
    Therefore, by \Cref{thm:main} and \Cref{thm: 2-BSD}, such $E$ as above admit infinitely many quadratic twists $E_d$ that unconditionally satisfy BSD.
\end{theorem}

Note that we have dropped the condition that $E$ has odd Manin constant, since the main theorem of \cite{vcesnavivcius2018manin} proves that it is $1$ for optimal semistable curves. Also note that the condition $d \equiv 1 \Mod{4}$ implies that $D = d$, so the condition $(d, 3N) = 1$ is equivalent to \Cref{cond:coprime_fund_disc}. The infinitude of $d$ is stated in \cite[Theorem 1.7]{burungale2024zeta} as relying on the two papers \cite{cai20202,zhai2016non}. These two papers establish infinitude of a family of twists by using the Chebotarev density theorem together with their modular symbols arguments. We provide some more details on this as follows.

\begin{proof}
Define
\[
    \mathcal{S}_0 := \{\ell \text{ prime} :
        \ell \nmid 6N,\ E \text{ ordinary at } \ell\}.
\]
Since $E$ is semistable over $\Q$, it is not CM; a theorem of Serre then gives that the
supersingular set of $E$ has density $0$. Together with the finite exclusions, $\mathcal{S}_0$ has natural density $1$. For any squarefree $d > 0$ whose prime factors lie in $\mathcal{S}_0$, conclusions (1) and (3) of the theorem therefore hold automatically.

We construct the family separately in cases~\ref{tors1} and~\ref{tors3}; in each, $d$ will be a product of two primes drawn from an appropriate Chebotarev set derived from $\mathcal{S}_0$.

\medskip
\noindent\textbf{Case~\ref{tors1}.}
Since $E(\Q)[2] = 0$, the $2$-division polynomial $f$ is irreducible
of degree $3$. Let $F = \Q[x]/(f(x))$ and $K_2 = \Q(E[2])$ its Galois closure. Then $\Gal(K_2/\Q) \in \{S_3, A_3\}$, and the condition ``$\ell$ inert in
$F$'' is equivalent to ``$\Frob_\ell$ has order~$3$ in
$\Gal(K_2/\Q)$''. Order-$3$ elements form a non-empty union of
conjugacy classes in either of those two possibilities ($S_3$ and $A_3$), so by Chebotarev applied to
$K_2/\Q$,
\[
    \mathcal{S} := \{\ell \in \mathcal{S}_0 : \ell \text{ inert in } F\}
\]
has positive density. (This is the Chebotarev set used in
\cite[Theorems~1.1, 1.3]{zhai2016non}.)

The splitting behaviour of an odd prime $p \mid N$ in $\Q(\sqrt{d})$
is controlled by the Legendre symbol $\bigl(\tfrac{d}{p}\bigr)$, and
the splitting behaviour of $2$ (relevant when $2 \mid N$) is controlled
by $d \bmod 8$. For each $\ell \in \mathcal{S}$ we therefore record
this data as the \emph{signature}
\[
    \sigma(\ell) := \Bigl(\ell \bmod 8,\
    \bigl(\tfrac{\ell}{p}\bigr)_{p \mid N,\ p \text{ odd}}\Bigr) \in
    (\Z/8\Z)^\times \times \{\pm 1\}^{\omega_o(N)},
\]
where $\omega_o(N)$ is the number of odd prime divisors of $N$. The
codomain is finite, while $\mathcal{S}$ is infinite, so some value
$\sigma_0$ is attained by infinitely many $\ell \in \mathcal{S}$;
write $\mathcal{S}_{\sigma_0}$ for this infinite subset. Choose
distinct $\ell_1, \ell_2 \in \mathcal{S}_{\sigma_0}$ and set
$d := \ell_1 \ell_2$. We verify the conclusions of the theorem in
turn.

Both $\ell_1$ and $\ell_2$ lie in $\mathcal{S}$, so both are inert in $F$. Since $\ell_1 \equiv \ell_2 \pmod 8$, we have $d \equiv \ell_1^2
\pmod 8$, and every odd square is $\equiv 1 \pmod 8$; hence $d \equiv
1 \pmod 8$. This gives conclusion (2), and when $2 \mid N$ it also
gives that $2$ splits in $\Q(\sqrt{d})$. For each odd $p \mid N$, the
Legendre symbol of the product is the product of the symbols, which
are equal by construction, so
$\bigl(\tfrac{d}{p}\bigr) = \bigl(\tfrac{\ell_i}{p}\bigr)^2 = 1$ and
$p$ splits in $\Q(\sqrt{d})$. Combined with the previous sentence,
this gives the second half of conclusion (5a). Finally,
$d = \ell_1 \ell_2 > 0$, so the positivity clause of (5a) is
automatic.

Letting $(\ell_1, \ell_2)$ range over distinct pairs in
$\mathcal{S}_{\sigma_0}$ produces infinitely many such $d$.

\medskip
\noindent\textbf{Case~\ref{tors3}.}
Since $E(\Q)[2] \cong \Z/2\Z$, the $2$-division polynomial factors as
$f(x) = (x - x_0)g(x)$ with $g$ an irreducible quadratic. The field
$\Q(E[2])$ is the splitting field of $g$, and the discriminant identity
$\Delta_E = 16 f'(x_0)^2 \cdot \mathrm{disc}(g)$ shows that this is
$\Q(\sqrt{\Delta_E})$. A similar direct computation with the explicit
$2$-isogeny gives $\Q(E'[2]) = \Q(\sqrt{f'(x_0)})$, where
$E' := E/\langle (x_0, 0) \rangle$ is the $2$-isogenous curve.

By \cite[Remark~1.3]{cai20202}, invoking \cite[Lemma~4.1]{kriz2019prime},
for a prime $\ell \nmid 2N$ with $\ell \equiv 1 \pmod 4$, the condition
$\ord_2(\#\widetilde{E}(\F_\ell)) = 1$ is equivalent to $\ell$ being
inert in \emph{both} $\Q(E[2])$ and $\Q(E'[2])$. Set
\[
    \mathcal{S}' := \{\ell \in \mathcal{S}_0 :
        \ell \equiv 1 \Mod 4,\
        \ell \text{ inert in } \Q(E[2]),\
        \ell \text{ inert in } \Q(E'[2])\}.
\]
The hypothesis that $-\Delta_E$, $f'(x_0)$, and $-f'(x_0)$ are not
rational squares ensure that $\Q(\sqrt{\Delta_E})$ and
$\Q(\sqrt{f'(x_0)})$ are both nontrivial quadratic extensions of $\Q$,
and that both are distinct from $\Q(i)$. The fields $\Q(\sqrt{\Delta_E})$
and $\Q(\sqrt{f'(x_0)})$ may or may not coincide --- they coincide
precisely when $\Delta_E \cdot f'(x_0)$ is a rational square --- but in
either situation the conditions defining $\mathcal{S}'$ specify a single
Frobenius element in $\Gal(\Q(i, \sqrt{\Delta_E}, \sqrt{f'(x_0)})/\Q)$.
By Chebotarev, $\mathcal{S}'$ has positive density. This is the set $S$
of \cite[Theorem~1.1]{cai20202}.

As in case~\ref{tors1}, the splitting of $2$ and of the odd primes
$p \mid N$ in $\Q(\sqrt{d})$ is controlled by $d \bmod 8$ and by the
Legendre symbols $\bigl(\tfrac{d}{p}\bigr)$, respectively. For each
$\ell \in \mathcal{S}'$, define as before the signature
\[
    \sigma(\ell) := \Bigl(\ell \bmod 8,\
    \bigl(\tfrac{\ell}{p}\bigr)_{p \mid N,\ p \text{ odd}}\Bigr) \in
    \{1, 5\} \times \{\pm 1\}^{\omega_o(N)},
\]
where the first coordinate is restricted to $\{1, 5\}$ because
$\ell \equiv 1 \Mod 4$. The codomain is finite while $\mathcal{S}'$ is
infinite, so some value $\sigma_0$ is attained by infinitely many
$\ell \in \mathcal{S}'$; write $\mathcal{S}'_{\sigma_0}$ for this
infinite subset. Choose distinct
$\ell_1, \ell_2 \in \mathcal{S}'_{\sigma_0}$ and set
$d := \ell_1 \ell_2$. We verify the conclusions of the theorem in turn.

Both $\ell_1$ and $\ell_2$ lie in $\mathcal{S}'$, so each satisfies
$\ell_i \equiv 1 \Mod 4$ and $\ord_2(\#\widetilde{E}(\F_{\ell_i})) = 1$;
this gives the first part of conclusion (5b). Since
$\ell_1 \equiv \ell_2 \pmod 8$, we have $d \equiv \ell_1^2 \pmod 8$, and
both possible values of $\ell_1 \bmod 8$ square to $1$ (namely
$1^2 = 1$ and $5^2 = 25 \equiv 1$); hence $d \equiv 1 \pmod 8$. This
gives conclusion (2) and also that $2$ splits in $\Q(\sqrt{d})$. For
each odd $p \mid N$, the Legendre symbol of the product is the product
of the symbols, which are equal by construction, so
$\bigl(\tfrac{d}{p}\bigr) = \bigl(\tfrac{\ell_i}{p}\bigr)^2 = 1$ and $p$
splits in $\Q(\sqrt{d})$. Combined with the splitting at $2$, this gives
the second part of conclusion (5b).

Letting $(\ell_1, \ell_2)$ range over distinct pairs in
$\mathcal{S}'_{\sigma_0}$ produces infinitely many $d$.
\end{proof}

\subsection{Algorithms}\label{ssec:algs}

\Cref{thm:main_alg} now readily translates into two algorithms: \Cref{algo: base_curve}, implemented in \texttt{infinite\_bsd/Algorithm1.py}, that checks the conditions on $E$; and \Cref{algo: twists}, implemented in \texttt{infinite\_bsd/Algorithm2.py}, that takes an $E$ successfully checked by \Cref{algo: base_curve}, as well as a squarefree integer $d$, and \texttt{True}/\texttt{Unknown} depending on whether or not the pair $(E,d)$ satisfies all the conditions listed in \Cref{thm:main_alg}.

\begin{algorithm}
\caption{Filter Eligible Elliptic Curves}
\begin{algorithmic}[1]{\label{algo: base_curve}}
\Require Elliptic curve $E$
\State\label{alg-step:squarefree}\textbf{Check:} $N$ is squarefree and not prime
\State \textbf{Check:} $a_3(E) \in \left\{-2, -1, 0, 1, 2\right\}$
\State\label{alg-step:no_p_isogeny}\textbf{Check:} $\forall p \in \{3, 5, 7\}$, $E$ does not admit a rational $p$-isogeny
\State\label{alg-step:discriminant}\textbf{Check:} $\forall p \mid N, \exists \ q \mid N$, $q \ne p$, s.t. $p \nmid \text{ord}_q(\Delta_E)$
\State \textbf{Check:} $E$ is optimal in its isogeny class
\State \textbf{Check:} \text{rank}($E$) = 0
\If{$E(\mathbb{Q})[2] \cong \mathbb{Z}/2\mathbb{Z}$}
    \State \textbf{Check:} $\text{ord}_2(L^{(\mathrm{alg})}(E,1)) = -1$
    \State Let $E' := E / E(\mathbb{Q})[2]$
    \State \textbf{Check:} $\Sha(E')[2] = 0$ and $E'(\Q)[2] \cong \Z/2\Z$
    \State Write $E : y^2 = f(x)$ and let $(x_0,0) \in E(\mathbb{Q})[2]$
    \State \textbf{Check:} none of $f'(x_0)$, $-f'(x_0)$, or $-\Delta_E$ is a square in $\mathbb{Q}$
\ElsIf{$E(\mathbb{Q})[2] = 0$}
    \State \textbf{Check:} $\text{ord}_2(L^{(\mathrm{alg})}(E,1))) = 0$
\EndIf
\If{2-part of BSD holds}\label{alg-step:2-part-bsd} 
    \State \textbf{Accept} $E$
\EndIf
\end{algorithmic}
\end{algorithm}

\begin{remark}\label{rem:alg1}
    \begin{enumerate}
        \item The requirement that $N$ not be prime in Step~\ref{alg-step:squarefree} is forced by the ramified prime condition~\ref{main-thm-ramified-prime} in \Cref{thm:main_alg}. By using the theory of Tate curves, this condition is seen to be equivalent to Step~\ref{alg-step:discriminant}. We are not claiming that $N$ being composite ensures the ramified prime condition holds.
        \item The check on isogeny degrees and optimality are database checks, since this information is stored in the LMFDB for each curve. We refer to \cite{cremona1997algorithms} for how these are computed.
        \item Step~\ref{alg-step:2-part-bsd} certainly warrants some discussion. The suite of papers mentioned in the Introduction (those that established full BSD for all curves of analytic rank $0$ or $1$ up to conductor $5{,}000$) furnished a very useful Sage function \texttt{prove\_BSD} that attempts to unconditionally prove BSD for a given curve, and returns the primes where verification of $\BSD(E,p)$ fails. We extract the source code for this specifically for $p = 2$ into a helper function \texttt{check\_BSD\_at\_2} and run this at the end of the routine to minimize the number of curves we use this expensive function on. This function returns True if and only if $\BSD(E,2)$ can be unconditionally shown to be true via $2$-descent. $2$-descent alone settles $\BSD(E,2)$ only when $\ord_2(\#\Sha_{\mathrm{an}}(E)) = 0$. Fortunately for us, this is the case for all curves in the LMFDB that reach this Step in the above algorithm. (In other words, $\#\Sha_{\mathrm{an}}(E)$ is always odd for us.)  For positive values of $\ord_2(\#\Sha_{\mathrm{an}}(E))$, one would need to carry out higher $2$-power descent.
    \end{enumerate}
\end{remark}

For an elliptic curve $E$ that is in the output of \Cref{algo: base_curve}, we next wish to identify the twists in a given range $[-B, B]$ that satisfy BSD. The conditions on $d$ here listed in the second half of \Cref{thm:main_alg} can again be extracted into a procedural \Cref{algo: twists} as follows.

\begin{algorithm}
\caption{Determine if $E_d$ satisfies BSD}
\begin{algorithmic}[1]{\label{algo: twists}}
\Require Elliptic curve $E$ output by \Cref{algo: base_curve}
\Require Squarefree integer $d$
\State $N \gets$ conductor($E$)
\State \textbf{Check:} $(d,3N) = 1$
\State \textbf{Check:} $\forall p | d, \ p \nmid a_p(E)$
\If{$E(\mathbb{Q})[2] \cong \Z/2\Z$}
    \State \textbf{Check:} $\forall p | d, \ p \equiv 1 \Mod{4}$ and $\ord_2(\#\widetilde{E}(\F_p)) = 1$
    \State\label{alg-step:quadratic-ram-1} \textbf{Check:} $d \equiv 1\pmod{8}$ and if $p \mid N$ is odd, then $\Big(\frac{\Delta_{\Q\sqrt{d}}}{p} \Big) = 1$
    \State \Return True
\ElsIf{$E(\mathbb{Q})[2] = 0$}
    \State \textbf{Check:} $d \equiv 1\pmod{4}$
    \State $f(x) \gets$ $2$-division polynomial of $E$
    \State $F \gets \Q[x]/(f(x))$
    \State \textbf{Check:} $\forall p | d$, $p$ is inert in $F$
    \State\label{alg-step:quadratic-ram-2} \textbf{Check:} $\forall p | N$, $\Big(\frac{\Delta_{\Q\sqrt{d}}}{p} \Big) = 1$
    \If{$\Delta_E > 0$}
        \State \textbf{Check:} $d > 0$
        \State \Return True
    \Else
        \State \Return True
    \EndIf
\EndIf
\State \Return Unknown
\end{algorithmic}
\end{algorithm}

\begin{remark}
For a fixed curve \(E\), we do not claim to characterize all quadratic twists \(E_d\) satisfying BSD. What our results give is an explicit, effectively enumerable subfamily of such \(d\)s. Equivalently, one may generate admissible \(d\)s by choosing prime factors from the relevant Chebotarev sets and imposing the required congruence and splitting conditions using \Cref{algo: twists}. Thus the procedure is more structured than testing arbitrary squarefree \(d\)'s one by one, although in bounded computations we still enumerate candidates within this explicitly described admissible set.
\end{remark}

We executed these algorithms on all elliptic curves in the LMFDB database up to conductor $500{,}000$; the output of doing this can be found at \path{infinite_bsd/output/twists_of_ec_labels_500k.txt} \cite{banwait2024quadtwist}. This lists, for each elliptic curve that passed \Cref{algo: base_curve}, the BSD-satisfying twists in the range $[-1000,1000]$. Readers wishing to see only the elliptic curves and not the twists may refer to \texttt{infinite\_bsd/output/ec\_labels\_500k.txt}.

Of the $3{,}064{,}705$ elliptic curves in the LMFDB with conductor below $500{,}000$, exactly $1{,}170{,}876$ $(\approx 38.2\%)$ have analytic rank 0. Of those, $274{,}888$ ($\approx 23.5\%$ of the analytic-rank-0 curves, $\approx 9.0\%$ of all curves with $N < 500{,}000$) are semistable, and after restricting further to those whose conductor has at least two prime divisors and that are optimal representatives of their isogeny classes, $178{,}364$ curves remain. Algorithm 1 accepts $36{,}687$ of these — approximately $20.5\%$ of the optimal semistable analytic-rank-0 candidates, and $1.20\%$ of all elliptic curves in the LMFDB with $N < 500,000$.

\subsection{The curves up to conductor 150}\label{ssec:bsd_curves_cond_150}

We list here the elliptic curve Cremona labels of the curves up to conductor 150 that our algorithm shows to admit infinitely many twists that unconditionally satisfy strong BSD. We present this in the same fashion as \cite[Example 1]{burungale2024zeta}, namely, two bullet points, the first for curves that use the results of \cite{cai20202} for establishing the $2$-part of BSD for the twist, and the second for curves that use the results in \cite{zhai2016non}:
\begin{itemize}
    \item 46a1, 69a1, 77c1, 94a1, 114b1, 141b1, 142c1;
    \item 106d1, 115a1, 118c1, 118d1, 141e1.
\end{itemize}

We compare these lists with the curves in \cite[Example 1]{burungale2024zeta}.

\begin{enumerate}
    \item The curves in our respective first bullet points match entirely.
    \item The curves in our second bullet form a strict subset of the curves in their second bullet point. The curves there that we are not listing here are those with Cremona label
    62a1, 66b1, 105a1, and 141c1. To investigate the discrepancy, we implemented the script \path{diagnose_curve_discrepancy.py} that produces \path{output/discrepancy_report.txt} showing exactly which of the conditions are being violated. We find that all four fail for the same reason: the condition for non-emptiness of $\mathcal{S}$ in \Cref{prop:S_non_empty} is violated; i.e. the set $\mathcal{S}$ is empty. (This is also confirmed by searching for primes up to $10{,}000$.) Since $\mathcal{S}$ is the set of prime support for the infinitely many twists, these four curves have not been shown to admit infinitely many twists satisfying BSD. 
    
    Moreover, all four of these curves have $E(\Q)[2] \cong \Z/2\Z$. For this second bullet point, the authors of that paper are relying on the theorems in \cite{zhai2016non} to establish the existence of infinitely many twists that unconditionally satisfy the $2$-part of BSD; and while there are such theorems in the $E(\Q)[2] = 0$ case, there are no such theorems in \cite{zhai2016non} that establish such a result in the case $E(\Q)[2] \neq 0$, as also mentioned in \Cref{rem:no-full-four-torsion}.\footnote{
    The closest statement is Theorem~1.6 of \emph{loc.\ cit.}, which applies only to Neumann--Setzer curves; however, this case is excluded in our setting, since such curves necessarily have prime conductor, which is precluded as in \Cref{rem:alg1}.
    }
\end{enumerate}

\section{Verifying the conjecture of Radziwiłł and Soundararajan}\label{sec:sound_conjecture}

Having established an algorithm to identify families of quadratic twists where the full Birch and Swinnerton-Dyer (BSD) conjecture holds, we now consider the statistical behavior of the Shafarevich--Tate group $\Sha(E_d)$ within these families. In particular, we investigate whether the distribution of the order of $\Sha(E_d)$ conforms to the probabilistic predictions formulated by Radziwiłł and Soundararajan.

\subsection{Background on the Conjecture}
In \cite{radziwill2015moments}, Radziwiłł and Soundararajan investigated the distribution of central $L$-values of quadratic twists of elliptic curves. Based on results from random matrix theory (specifically the Keating--Snaith conjecture), they formulated a conjecture describing the distribution of the size of $\Sha(E_d)$ for rank zero twists, that we now briefly recall.

Let $E$ be an elliptic curve over $\mathbb{Q}$ given by the model $y^2 = f(x)$ for a monic squarefree cubic polynomial $f \in \mathbb{Z}[x]$. Let $K$ be the splitting field of $f$ over $\mathbb{Q}$ and $G = \text{Gal}(K/\mathbb{Q})$ viewed as a subgroup of $S_3$. For $g \in G$, let $c(g)$ denote $1 + |\text{Fix}(g)|$, where $\text{Fix}(g)$ is the number of fixed points of $g$ acting on the roots of $f$. We define the arithmetic mean $\mu(E)$ and variance $\sigma(E)^2$ associated to the Galois action as:
\[
\mu(E) = -\frac{1}{2} - \frac{1}{|G|} \sum_{g \in G} \log c(g) \quad \text{and} \quad \sigma(E)^2 = 1 + \frac{1}{|G|} \sum_{g \in G} (\log c(g))^2.
\]
The Radziwiłł--Soundararajan conjecture predicts that as $d$ ranges over those fundamental discriminants where the twist $E_d$ has root number +1 (and hence even analytic rank), the distribution of the normalized values
\[
Z_d = \frac{\log \left( |\Sha(E_d)| / \sqrt{|d|} \right) - \mu(E) \log \log |d|}{\sqrt{\sigma(E)^2 \log \log |d|}}
\]
converges to a standard Gaussian distribution $\mathcal{N}(0,1)$ as $|d| \to \infty$. 

The above is sufficient for our purposes here, although we note that there is a more precise version given by the authors as Conjecture 1 in their paper.

\subsection{Verifying the conjecture}\label{subsec:unconditional_family}

We first wish to verify the Radziwill--Soundararajan conjecture numerically for all twists with root number $+1$, assuming BSD. We focus on the elliptic curve $E=\texttt{46a1}$ given by $y^2=x^3-163x-930$ that arises in \Cref{ssec:bsd_curves_cond_150}; the other curves listed there exhibit the same pattern. For this curve, we compute
\[
Z_d^{\textup{an}} := \frac{\log\!\bigl(|\Sha(E_d)^{\mathrm{an}}|/\sqrt{|d|}\bigr)-\mu(E)\log\log|d|}
{\sqrt{\sigma(E)^2\log\log|d|}},
\]
where as usual
\[
|\Sha(E_d)^{\mathrm{an}}| := \frac{L^{(r)}(E,1)/r! \cdot |E(\Q)_{\mathrm{tors}}|^2}{\Omega(E)\cdot\prod_pc_p\cdot \Reg(E)}
\]
is the order of $\Sha(E_d)$ derived from the strong BSD formula.

Figure \ref{fig:baseline_distribution} displays the results for $X = 100{,}000$, where $X$ is the bound on $d$ (excluding 472 values where the computation for the order of $\Sha(E)^{\textup{an}}$ took longer than 10 seconds and so was abandoned).

\begin{figure}[h!] 
\centering 
\includegraphics[width=0.8\textwidth]{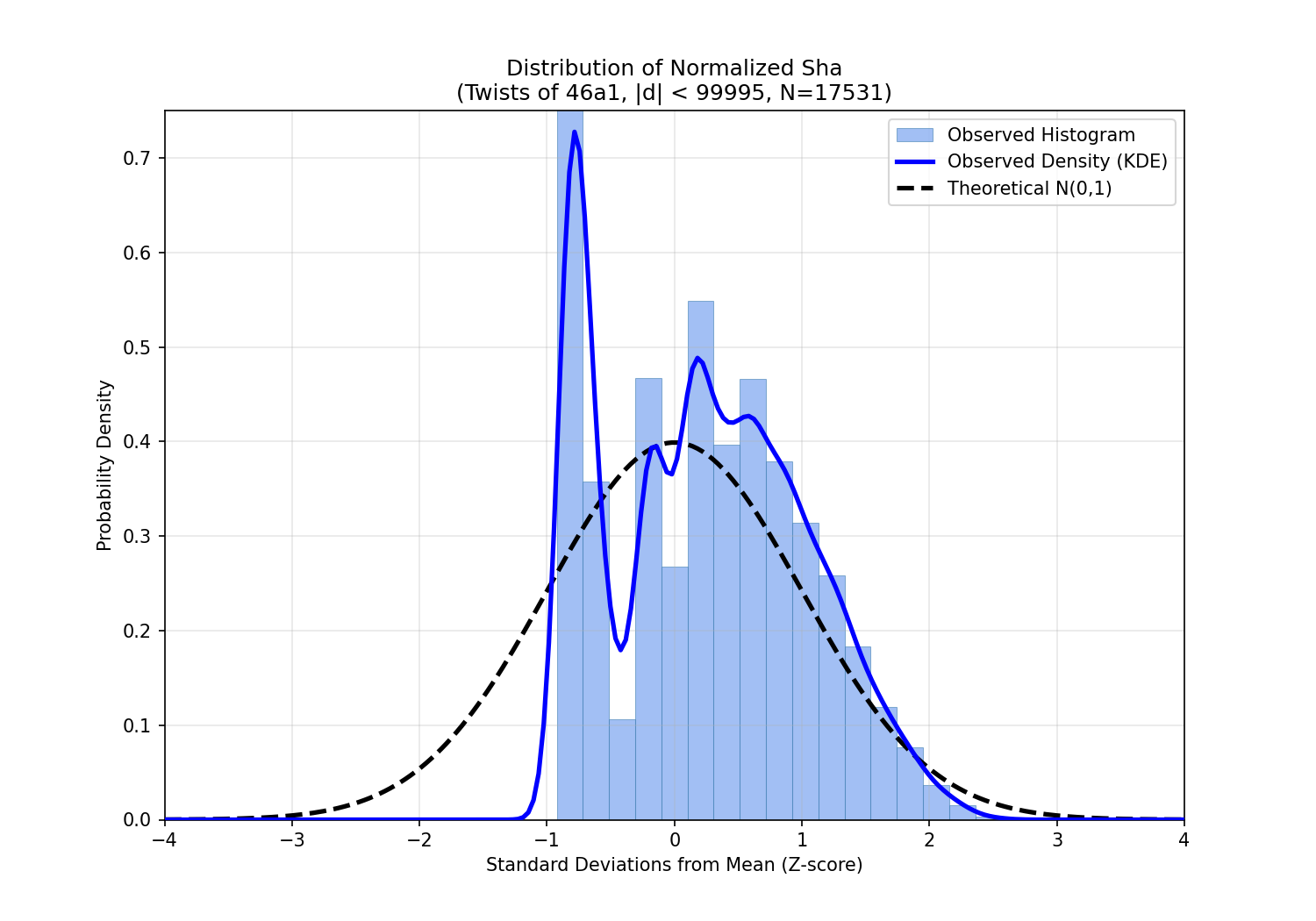} \caption{Distribution of $17{,}531$ $Z_d^{\textup{an}}$ values for generic twists of \texttt{46a1} with $|d| \le 100{,}000$. The histogram (light blue) shows the raw counts, while the Kernel Density Estimate (solid blue line) provides a smooth approximation that closely tracks the theoretical standard Gaussian prediction (black dashed). A time-lapse visualization of the convergence to $\mathcal{N}(0,1)$ is available at \href{https://radzi-sound-conjecture.netlify.app/46a1_maxd100000_nf100_20260116_185105_pdf}{this link}.} \label{fig:baseline_distribution}
\end{figure}

\subsubsection{Statistical Diagnostics for Normality}\label{subsubsec:normal_test}

To quantify the convergence of the empirical distributions of $Z_d$ to $\mathcal{N}(0,1)$, we compute two standard metrics: the Kolmogorov--Smirnov (K--S) distance (cf.~\cite{massey1951kolmogorov}) and the Wasserstein distance (cf.~\cite{villani2008optimal}). The KS distance measures the maximum vertical deviation between CDFs, whereas the Wasserstein distance quantifies the overall discrepancy between distributions by computing the minimal cost of transporting mass via optimal transport. Both are widely used measures of distributional discrepancy (cf.~\cite{devroye2013probabilistic, gibbs2002choosing}). At $X=10{,}000$, both distances exhibit noticeable deviations from the standard normal distribution, consistent with finite-sample effects. However, as $X$ increases, the K--S and Wasserstein distances decrease systematically across all examples, providing evidence of convergence toward the Gaussian limit predicted by Radziwiłł--Soundararajan.

Figure~\ref{fig:46a1_diagnostics} illustrates this behavior for \texttt{46a1} as a representative case, where both distance metrics decrease as the discriminant bound grows to $X=100{,}000$.

\begin{figure}[h!]
\centering

\begin{subfigure}[t]{0.48\textwidth}
    \centering
    \includegraphics[width=\textwidth]{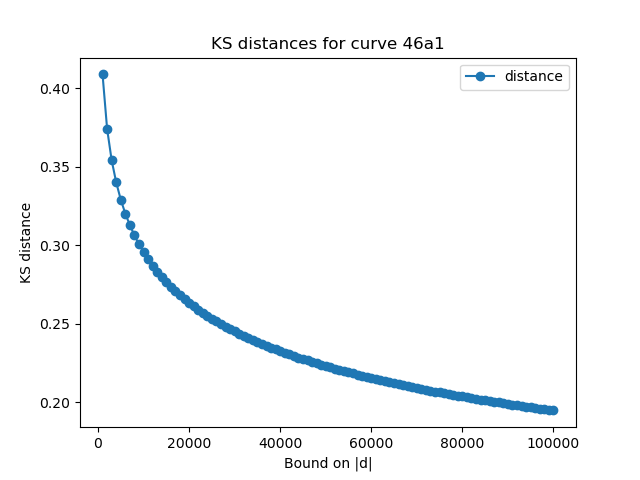}
    \caption{KS distance}
\end{subfigure}
\hfill
\begin{subfigure}[t]{0.48\textwidth}
    \centering
    \includegraphics[width=\textwidth]{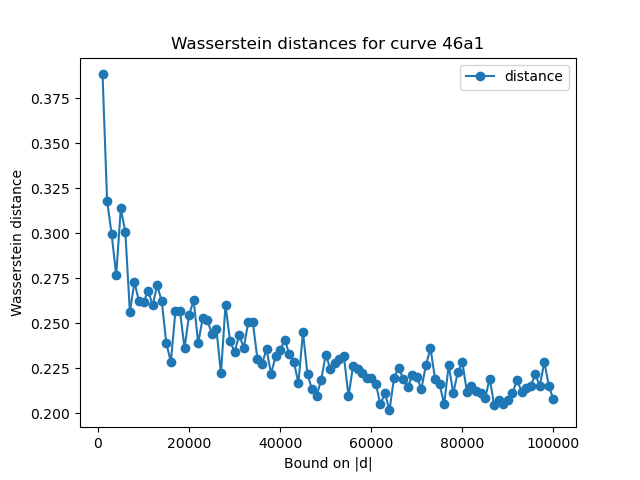}
    \caption{Wasserstein distance}
\end{subfigure}{\label{fig:KS}}

\caption{Goodness-of-fit diagnostics for curve 46a1 as functions of the truncation bound on $|d|$ where the maximum bound $X$ increases to $100,000$. The KS and Wasserstein distances decrease with increasing bound indicating improved distributional agreement to normality.}
\label{fig:46a1_diagnostics}
\end{figure}

\subsection{Observed Deviation in the BSD Family}\label{subsec:deviation}

Having validated the generic case, we now turn to our primary object of interest: the family of twists identified in Section \ref{sec : p-BSD}. These are the twists for which the full BSD conjecture is unconditionally proven. We refer to this set of twists by $\mathcal{D}_{\mathrm{BSD}}$. We note that this set is defined by explicitly computable local conditions, implemented in the \texttt{get\_admissible\_twists\_CLZ} and \texttt{get\_admissible\_twists\_Zhai} functions in \path{ants_xvii/infinite_bsd/Algorithm2.py}.

A natural question is whether the distribution of $|\Sha(E_d)|$ for $d\in\mathcal{D}_{\mathrm{BSD}}$ follows the generic Radziwiłł--Soundararajan (RS) prediction. We take one example: for the elliptic curve \texttt{46a1}, we generated the set $\mathcal{D}_{\mathrm{BSD}} \cap [-X, X]$ with $X = 300,000$ using the criteria when $E(\mathbb{Q})[2] \cong \Z/2\Z$ from \cite{cai20202}, implemented in our \Cref{algo: twists}. This yielded a subset of $N=1008$ fundamental discriminants. Recall that unlike the generic family, membership in $\mathcal{D}_{\mathrm{BSD}}$ requires satisfying stringent arithmetic constraints (see \Cref{algo: twists}). Specifically, for every $d \in \mathcal{D}_{\mathrm{BSD}}$, we require:
\begin{itemize}
    \item $d$ is squarefree and $d \equiv 1 \pmod 8$;
    \item Every prime factor $p \mid d$ satisfies $p \equiv 1 \pmod 4$;
    \item For all odd primes $q$ dividing the conductor $N=46$, the Legendre symbol satisfies $\left(\frac{d}{q}\right) = 1$.
\end{itemize}

In particular, these conditions imply that $d$ must be positive: if $d$ were negative, the condition on its prime factors would imply $d \equiv 3 \pmod 4$, contradicting the requirement that $d \equiv 1 \pmod 8$. Consequently, our family $\mathcal{D}_{\mathrm{BSD}}$ consists entirely of real quadratic twists, in contrast to the generic case which includes both real and imaginary fields.

As in \cite[Section 8.1]{dabrowski2016behaviour} we computed $|\Sha(E_d)|$ from the BSD formula (which we know unconditionally for these values of $d$) for rank 0 curves (which we know to be the case from Algorithms 1 and 2 from \Cref{sec : p-BSD}) as follows: \[ |\Sha(E_d)| = \frac{L(E,1)\cdot|E_d(\Q)_{\mathrm{tors}}|^2}{\Omega(E)\cdot\prod_pc_p}.\]
We then computed $Z_d$; the resulting distribution is shown in Figure \ref{fig:biased_distribution}.
 
\begin{figure}[h!]
    \centering
    \includegraphics[width=0.8\textwidth]{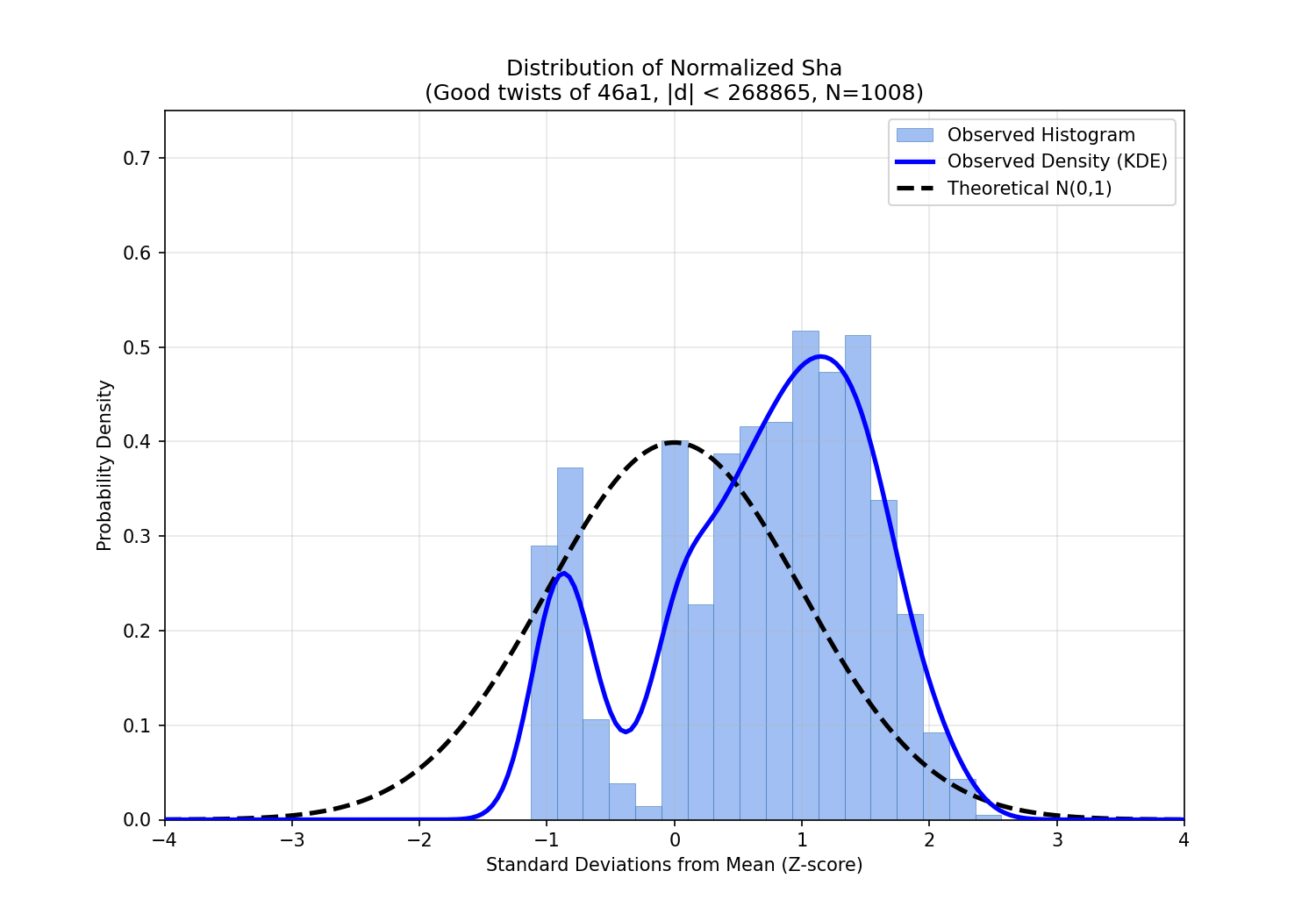}
    \caption{Distribution of normalized $Z_d$ for $1008$ of the unconditionally BSD-satisfying twists of \texttt{46a1}. The observed distribution (blue) is clearly shifted to the right of the generic Radziwiłł--Soundararajan prediction (black dashed). This shift reflects the bias introduced by restricting to real quadratic fields with split primes at the conductor. A time-lapse visualization of the convergence to $\mathcal{N}(0,1)$ is available at \href{https://radzi-sound-conjecture.netlify.app/46a1_restricted_nf50_20260115_014127_pdf}{this link}.}
    \label{fig:biased_distribution}
\end{figure}

The behaviour of the two distributions is clearly significantly different, with the main qualitative difference being that \Cref{fig:biased_distribution} has shifted to a bimodal shape. 

This anomaly need not be a failure of the Radziwiłł--Soundararajan heuristics, but rather a demonstration of their sensitivity to arithmetic progressions. The generic parameters $\mu(E)$ and $\sigma(E)$ are derived by averaging over the Galois group $G$ under the assumption that Frobenius elements are equidistributed as $d$ varies. However, our family $\mathcal{D}_{\mathrm{BSD}}$ explicitly restricts the factorization of $d$ (forcing $p \equiv 1 \pmod 4$), fixes the sign of the discriminant (forcing real fields), and fixes the quadratic character at the conductor (forcing $\left(\frac{d}{23}\right) = 1$). These restrictions alter the density of primes where our twists are supported, necessitating a correction to the probabilistic model. 

If we observed persistent deviations from Gaussian behavior in $|\Sha(E_d)|$ across unconstrained, randomly sampled families of quadratic twists, such as the baseline family shown in \Cref{fig:baseline_distribution}, this would provide more plausible evidence against the conjecture. However, we do not observe such a deviation in the baseline distribution.
We also note that the introduction of Radziwiłł--Soundararajan mentions that their method ``is flexible enough to allow the introduction of a sieve over the fundamental discriminants $d$''; this being the case, we could still expect the distribution in \Cref{fig:biased_distribution} to become approximately Gaussian (with different mean and variance) with more data, and this would be worthy of further investigation. 

\bibliographystyle{amsplain}
\bibliography{references.bib}{}

\end{document}